\documentclass[11pt,reqno]{amsart}
\usepackage[
  letterpaper,
  margin=1.15in
]{geometry}               
\usepackage[pagewise]{lineno}
\usepackage{amsmath,amsthm,amssymb,amsfonts,relsize}
\usepackage[english]{babel}
\usepackage{graphicx}
\usepackage{enumitem}
\usepackage{float}
\usepackage{mathtools}
\usepackage{theoremref}
\usepackage{tikz}
\usepackage{pgfplots}
\usetikzlibrary{fillbetween}
\numberwithin{equation}{section}
\newtheorem{theorem}{Theorem}[section]
\newtheorem{remark}[theorem]{Remark}

\newtheorem{lemma}[theorem]{Lemma}

\usepackage{blindtext}
\usepackage{xcolor}

\usepackage{hyperref}
\DeclarePairedDelimiter{\norm}{\lVert}{\rVert}
\newcommand{\R}{\mathbb{R}}

\newcommand{\wh}{\widehat}

\newcommand{\F}{\mathcal{F}}

\newcommand{\eps}{\varepsilon}
\author[D. Sinambela]{Daniel Sinambela}
\address{Department of Mathematics, University of Utah, Asia Campus} 
\email{daniel.sinambela@utah.edu}
\begin{document}
\title[free-surface Reconstruction Error]{Generalized Error Bounds in the Recovery of Solitary Wave Profiles}
\begin{abstract}
We investigate the robustness of Constantin’s explicit reconstruction formula for two-dimensional irrotational solitary water waves. This formula recovers the free-surface profile from the dynamic pressure trace at the bed and depends on both the wave speed and the undisturbed depth. We consider simultaneous perturbations in these three quantities and derive an $L^2$ error estimate for the reconstructed profile. The proof uses the hodograph transform, holomorphic extension arguments, and Paley--Wiener Fourier-decay estimates, yielding stability estimates with sublinear dependence on the perturbation size. We include numerical computations to illustrate the effects of  specifically designed perturbations.
\end{abstract}
\maketitle

\tableofcontents
\section{Introduction}

\indent Reconstructing the free surface of a water wave from pressure measurements is a classical inverse
problem in fluid mechanics. The pressure plays an important role in establishing various qualitative properties of traveling water waves in irrotational flow, e.g. in the description of the particle trajectories beneath the waves \cite{Constantin2012, ConstantineStrauss2010}. For solitary waves, in particular, there have been a plethora of works  devoted to show that the dynamic pressure on the seabed indeed provides useful information for the recovery of the free surface of the wave, see \cite{ClamondConstantin2013,ChenWalsh2018,ChenWalshHur2017}. The simplest recovery approximation formula in this direction is given by the hydrostatic law
\[
\eta(x)\approx \frac{1}{g}\mathfrak{p}(x),
\]
where $g$ is the gravitational acceleration constant and $\mathfrak{p}$ is the dynamic pressure on the bed given by \begin{equation}\label{dynamic pressure trace}
\mathfrak p(x):=P(x,-d)-P_{\mathrm{atm}}-gd.
\end{equation}

A more refined linear theory, derived by Escher and Schl\"urmann \cite{Escher2008}, yields the transfer-function relation,
\[
\F\{\eta\}(k)=\frac{\cosh(kd)}{g}\,\F\{\mathfrak{p}\}(k),
\]
obtained by linearizing the water-wave equations about a flat free surface, where the Fourier transform $\F$ (and its inverse $\F^{-1}$) is taken in the convention
\[
\F\{f\}(k)=\int_{\R} f(x)e^{-ikx}\,dx,
\qquad
\F^{-1}\{\wh f\}(x)=\frac{1}{2\pi}\int_{\R}\wh f(k)e^{ikx}\,dk.
\]
As expected, the linear approximation formulae above are effective only in sufficiently shallow-water regimes. Field-data comparisons also show that linear transfer-function methods can miss high-frequency content and underestimate crest elevations in nonlinear waves \cite{mouragues2019field}. This occurs because the linear theory is unable to capture the dispersive or nonlinear
effects present in the full water-wave problem. It is well understood, for instance in coastal engineering, that nonlinear effects play a major significant role in the problem of reconstructing the free surface for shallow-water waves and for waves propagating in the surf zone, see for instance \cite{bishop1987measuring,tsai2005recovery,bergan1969wave,Bonneton2017}. These shortcomings have prompted efforts to derive reconstruction formula that captures nonlinear effects and remain valid in the solitary-wave setting.

In particular, several authors have obtained nonlinear, nonlocal relations linking the dynamic pressure trace at the bed to the corresponding solitary-wave surface profile, see for instance \cite{Constantin2012,Deconinck2012,Oliveras2012}.
Oliveras et al. \cite{Oliveras2012} derive a formula that encodes an implicit and
nonlocal relation between pressure and surface elevation
\[
\sqrt{\frac{c^2-2g\eta(x)}{1+\eta_x^2(x)}}
=
\mathcal{F}^{-1}
\left\{
\cosh\!\bigl(k(\eta+d)\bigr)\,
\mathcal{F}
\left\{
\sqrt{c^2-2\mathfrak{p}}
\right\}(k)
\right\},
\qquad x\in\mathbb{R}.
\] This fully nonlinear approach was later developed into an operational reconstruction method and tested against laboratory measurements, see \cite{Vasan2017}.

Another well-known surface recovery formula that incorporates the nonlinear effects is given by Constantin \cite{Constantin2012}. In comparison with the formula of Oliveras et al., Constantin's formula provides an explicit
parametric reconstruction for irrotational solitary waves that does not require solving an implicit nonlocal equation for $\eta$, making it possible to recover the
free surface directly from pressure data taken at the flat bed. More precisely, Constantin's formula reads
\begin{equation}\label{eq:constantin_formula}
\left\{
\begin{aligned}
x(q)
&=
q+\int_{-\infty}^{q}
\F^{-1}\!\left[
\cosh(kd)\,
\F\!\left(
\frac{1}{\sqrt{1-2\kappa \mathfrak p}}-1
\right)(k)
\right](s)\,ds,\\
\eta(q)
&=
\F^{-1}\!\left[
\frac{\sinh(kd)}{k}\,
\F\!\left(
\frac{1}{\sqrt{1-2\kappa \mathfrak p}}-1
\right)(k)
\right](q),
\end{aligned}
\right.
\end{equation}
where $\eta=\eta(q)$ is the solitary-wave profile, $d>0$ is the undisturbed depth, and $
\kappa=\frac{1}{c^2},$
with $c>0$ the wave speed. We also briefly mention several related developments beyond the irrotational solitary-wave setting considered here. Recovery methods, in the same spirit as the aforementioned papers, have also been extended to extreme waves, rotational flows, overhanging profiles, and waves with arbitrary bed pressure; see, for instance, \cite{ClamondDidierHenry2020,HenryThomas2018,labarbe2023general} and the references therein.

The availability of such recovery formulas raises a natural and practically important question: how stable are they under perturbations of the input data? In any realistic measurement process, the pressure trace is subject to error, and the physical parameters entering the reconstruction, such as the wave speed and the depth, are not known exactly. Thus, although the pressure trace formally determines the surface profile, one must understand how errors in the data propagate through the reconstruction map. This issue is particularly delicate because the formulas involve Fourier multipliers and analyticity properties, and hence small perturbations in the data need not lead to straightforward estimates in the reconstructed profile.

In her thesis, Bleile \cite{bleile2016} studied the robustness of Constantin's formula under perturbations of the wave speed alone
and showed that the reconstruction error can be bounded in $L^2$, with a sublinear exponent
coming from analyticity and Paley--Wiener decay. Since the bed pressure trace depends, both explicitly and implicitly, on the depth $d$, it is natural to expect that the depth $d$ also plays a role in implicitly dictating the free surface. Hence, practically any noise in the depth is expected to introduce an error in the recovery of the free surface. The goal of the present paper is to extend the
analysis of Bleile \cite{bleile2016} to a more practical perturbation model that takes into account three possible sources of errors: wave speed, pressure trace, and depth. In summary, the present work investigates the stability of \eqref{eq:constantin_formula} when the three aforementioned errors are present.

The paper is organized as follows. In Section 2, we summarize the water-wave setting, introduce the hodograph transform, and re-derive Constantin’s formula. In Section 3, we introduce the perturbed quantities and state the main results. Section 4 contains the proof of the main theorem. Finally, in Section 5, we present numerical results illustrating the free-surface reconstruction errors for particular choices of perturbations.

\section{Setting}\label{sec:setting}
\subsection{Governing Equations}
We consider a two-dimensional solitary wave propagating with speed $c>0$ over a flat bed of depth
$d>0$. The fluid occupies the domain
\[
D:=\{(X,Y): X\in\R,\ -d\le Y\le Z(X-ct)\},
\]
where $Z=Z(X-ct)$ is the traveling-wave profile in moving frame. Passing to a frame moving to the right with speed $c$, we introduce a new coordinate system, namely
\[
x=X-ct,\qquad y=Y.
\]
In the new coordinate system, the flow becomes steady and the fluid domain now reads
\[
\Omega:=\{(x,y): x\in\R,\ -d\le y\le \eta(x)\}.
\]

We assume that the fluid is governed by the steady irrotational incompressible Euler equations 
\begin{equation}\label{eq:euler}
\begin{cases}
u_x+v_y=0,\\
(u-c)u_x+vu_y=-P_x,\\
(u-c)v_x+vv_y=-P_y-g,
\end{cases}
\qquad \text{in }\Omega,
\end{equation}
together with the irrotationality condition
\begin{equation}\label{eq:irrot}
u_y-v_x=0.
\end{equation}
On the boundaries of $\Omega$, we impose standard boundary conditions
\begin{equation}\label{eq:bc}
\begin{cases}
v=0, & y=-d,\\
v=(u-c)\eta_x, & y=\eta(x),\\
P=P_{\mathrm{atm}}, & y=\eta(x),
\end{cases}
\end{equation}
where $P_{\mathrm{atm}}$ is the constant atmospheric pressure.
Since we investigate solitary water waves, we impose the following limiting conditions on the surface profile and the velocity of the wave:
\[
\eta(x)\to 0,\qquad (u-c,v)\to (-c,0)\qquad \text{as }|x|\to\infty.
\]
Additionally, we assume the no-stagnation condition inside the bulk of the fluid, which translates into the inequality
\begin{equation}\label{eq:no-stag}
u-c<0 \qquad \text{in }\Omega.
\end{equation}

\subsection{Hodograph Transform}
The water-wave problem is a free-boundary problem in the sense that the fluid domain is not known a priori: the free surface is itself part of the solution. This feature introduces a substantial additional layer of difficulty, since one must determine simultaneously both the governing flow variables and the geometry of the domain on which they are defined. A standard approach for overcoming this is to employ the Hodograph transform, which maps the unknown fluid region onto a fixed domain by flattening the free surface. In the transformed variables, the problem is reformulated on a prescribed geometry, making it significantly more amenable to analysis. We briefly describe this construction below.

Due to the incompressibility and irrotationality conditions, we can express the velocity field in the following way
\[
u-c= \psi_y, \qquad -v=\psi_x,
\]
where $\psi$ is called the stream function.
Further, there exists a velocity potential $\phi$ satisfying the equations 
\[
\phi_x=u-c, \quad \phi_y=v.
\]
One can easily check that both the stream function $\psi$ and the velocity potential $\phi$ are harmonic.   

Let us now introduce a new complexified variable $z:=x+iy$ and the function $f(z):=(\phi(x,y)+i\psi(x,y))$.
Differentiating $f$ with respect to $z$ yields 
\[
\frac{df}{dz}=\phi_x-i\phi_y=(u-c)-iv.
\]
Via the Cauchy--Riemann equations, it is straightforward to see that $f$ is holomorphic in the complex domain. To fix the domain, we now introduce the hodograph transform  $\mathcal{H}$    taking advantage of the streamfunction and velocity potential. The map $\mathcal{H}$ sends 
 the domain $\Omega$ to a fixed rectangular strip in the following way:
\[
\mathcal{H}:\Omega \to \mathbb{R}\times[-d,0], \qquad \mathcal{H}(x,y):=\left(\frac{-\phi(x,y)}{c},-\frac{\psi(x,y)}{c}\right).
\]
Under the no-stagnation assumption, $\mathcal{H}$ defines an analytic bijection from the domain $\Omega$ to $T:=\mathbb{R} \times [-d,0]$. For the sake of notational convenience, we introduce the following variables to denote the coordinates in $T$:
\[
q=-\frac{\phi(x,y)}{c},\quad  p=-\frac{\psi(x,y)}{c}.\]
Observe that under the mapping $\mathcal{H}$,
the bed $y=-d$ gets mapped to $p=-d$ and the free surface $y=\eta(x)$ gets mapped to $p=0$. We refer the reader to Figure~\ref{fluid domain figure} for the visualization of the original and transformed domains.
\begin{figure}\label{fig1}
\centering
\begin{tikzpicture}[xscale=1.25,yscale=0.65]


\draw [fill=gray,thin,gray] (-4,-1.7) rectangle (4,-1.5);
\draw [thin] (-4,-1.5) -- (4,-1.5);

\draw [domain=-4:4,ultra thick,smooth] 
  plot (\x,{2.5+2.5/(exp(\x)+exp(-\x))});

\node [below] at (1.25,4.5) {\footnotesize $y=\eta(x)$};
\node [below] at (1.25,-0.65) {\footnotesize $y=-d$};

\node [right] at (3.6,0.35) {\footnotesize $h(q,p)=y+d$};
\draw [thick,stealth-stealth] (3.6,-1.35) -- (3.6,2.5);

\node at (0,1) {\footnotesize $\Omega$};

\draw[line width=2pt,->] (0,-1.75) -- (0,-3.25);

\draw [thin] (-4,-3.4) -- (4,-3.4);
\draw [thin] (-4,-7.2) -- (4,-7.2);

\node[below left] at (0.75,2.60) {$y=-d$};

\node[right] at (0.3,-2.2) {$q=-\phi/c$};
\node[right] at (0.3,-2.8) {$p=-\psi/c$};

\node[below left] at (-3.1,-3.5) {$p=0$};
\node[below left] at (-3,-6.3) {$p=-d$};

\end{tikzpicture}
\caption{The Hodograph transform}
\label{fluid domain figure}
\end{figure}
\subsection{Derivation of Constantin’s Formula}
For the sake of completeness, in this subsection we re-derive Constantin's formula. Some of the equations and identities used in the derivation will also be needed in our analysis. Hence, we include them here. We begin by defining the height function $h(q,p)=y+d.$ As a result we obtain the following useful partial derivative formulas,
\[
\begin{aligned}
&\partial_q=h_p\partial_x +h_q\partial_y,\\
&\partial_p=-h_q\partial_x +h_p\partial_y.
\end{aligned}
\]
Employing the above formulas, we then obtain two useful identities
\begin{equation}\label{hqhp}
h_q=\frac{\partial y}{\partial q}=-\frac{\partial x}{\partial p}=-\frac{cv}{(u-c)^2+v^2}, \qquad h_p=\frac{\partial y}{\partial p}=\frac{\partial x}{\partial q}=-\frac{c(u-c)}{(u-c)^2+v^2}.
\end{equation}














Now, we let 
\[
w := (u-c,v)
\]
denote the velocity field in the frame moving at speed $c$.  
The steady Euler equations \eqref{eq:euler} in this frame read
\begin{equation*}
(w \cdot \nabla) w = - \nabla P - g \, \hat{\mathbf{y}}, 
\qquad \nabla \cdot w = 0,
\qquad \nabla \times w = 0,\qquad \hat{\mathbf{y}}=(0,1).
\end{equation*}
Using the irrotationality condition and the vector identity
\begin{equation*}
(w \cdot \nabla) w 
= \nabla \!\left( \tfrac{1}{2} |w|^2 \right) - w \times (\nabla \times w),
\end{equation*}
 the Euler equations reduce to
\begin{equation*}
\nabla \!\left( \tfrac{1}{2} |w|^2 + g y + P \right) = 0,
\end{equation*}
which is equivalent to saying
\begin{equation*}
\tfrac{1}{2} |w|^2 + g y + P = \text{constant } .
\end{equation*}

To determine the constant, we use the far--field behavior: 
as $x \to \pm \infty$, the free surface tends to $y=0$, 
the pressure tends to $P_{\mathrm{atm}}$, and the velocity tends to $(u,v) \to (0,0)$ in the moving frame, 
so that $w \to (-c,0)$.  
Therefore, we obtain
\begin{equation*}
2 \,(P - P_{\mathrm{atm}}) + 2 g y + (u-c)^2 + v^2 = c^2, \qquad \textup{in } \Omega,
\end{equation*}
which is known as Bernoulli's equation. Using this equation together with the fact that 
\begin{equation} \label{velocity squared}
    (u-c)^2+v^2=\frac{c^2}{h_q^2+h_p^2},
\end{equation}
the Bernoulli equation now reads
\[
\frac{c^2}{(h_p^2+h_q^2)}+2(P-P_{\text{atm}})+2g(h-d)=c^2, \qquad \text{in } T.
\]
Evaluating the above equation on the bed $p=-d$ yields
\[
\frac{c^2}{h_p^2(q,-d)}+2(P_b-P_{\text{atm}})-2gd=c^2,
\]
where $P_b=P(x,-d)$ is the pressure measured on the bed. 

In the remaining portion of the article, we will recycle the notation $\mathfrak{p}$ to denote the dynamic pressure on the bed in hodograph domain, $T$. Thus, we can write the above equation in terms of $\mathfrak{p}$ as follows
\[
\frac{c^2}{h_p^2(q,-d)}=c^2-2\mathfrak{p}(q), \quad \textup{for all } q\in \mathbb{R}.
\]
Solving for $h_p(\cdot,-d)$, we get
\begin{equation}
h_p(q,-d)=\frac{1}{\sqrt{1-2\kappa \mathfrak{p}(q)}}, \text{ where} \qquad \kappa=\frac{1}{c^2}.
\label{eq:Neumann-bed}
\end{equation}
\medskip

Furthermore, via the incompressibility and irrotationality assumption, we deduce
\begin{equation}
h_{qq}+h_{pp}=0 \quad \text{in } T.
\label{eq:Laplace}
\end{equation}
We solve \eqref{eq:Laplace} by applying the Fourier transform in $q-$variable. 
Applying $\F$ in $q$ to \eqref{eq:Laplace} gives, for each $k\in\mathbb{R}$,
\begin{equation}
\partial_{pp}\,\widehat{h}(k,p) - k^2\,\widehat{h}(k,p)=0, \qquad -d\le p\le 0.
\label{eq:ODE}
\end{equation}
The general solution of \eqref{eq:ODE} is
\begin{equation}
\widehat{h}(k,p) = A(k)\,\cosh\!\big(k(p+d)\big)\;+\;B(k)\,\sinh\!\big(k(p+d)\big).
\label{eq:Hgeneral}
\end{equation}
Evaluating \eqref{eq:Hgeneral} at $p=-d$ yields
\[
\widehat{h}(k,-d)=A(k)\cosh(0)+B(k)\sinh(0)=A(k)=\F\{h(\cdot,-d)\}(k)=0,
\]
hence
\begin{equation*}
A(k)=0.
\label{eq:Azero}
\end{equation*}
Differentiating \eqref{eq:Hgeneral} in $p$ and evaluating the result at $p=-d$ allow us to infer
\[
\partial_p\widehat{h}(k,p)\big|_{p=-d}=A(k)\,k\,\sinh(0)+B(k)\,k\,\cosh(0)=k\,B(k).
\]
Applying the Fourier transformation to \eqref{eq:Neumann-bed} in $q$ gives
\begin{equation}\label{hp on bed}
\partial_p\widehat{h}(k,-d)=\F\!\left\{\frac{1}{\sqrt{1-2\kappa\mathfrak{p}(q)}}\right\}(k).
\end{equation}
Therefore,
\begin{equation}
B(k)=\frac{1}{k}\,\F\!\left\{\frac{1}{\sqrt{1-2\kappa\mathfrak{p}(q)\,}}\right\}(k)
\qquad (k\neq 0).
\label{eq:Bcoeff}
\end{equation}
Finally, combining \eqref{eq:Azero}--\eqref{eq:Bcoeff} in \eqref{eq:Hgeneral} we obtain, for $k\neq 0$,
\begin{equation}\label{formula h hat}
\widehat{h}(k,p)=\frac{\sinh\!\big(k(p+d)\big)}{k}\;
\F\!\left\{\frac{1}{\sqrt{1-2\kappa\mathfrak{p}(q)\,}}\right\}(k).
\end{equation}

For the zeroth mode, i.e, when $k=0$, note that
\[ 
\widehat{h}(0,p)=(p+d)\,\partial_p\widehat{h}(0,-d).
\]But, since
$\sinh(k(p+d))/k\to (p+d)$ as $k\to 0$, the formula \eqref{formula h hat} extends continuously to $k=0$ by this limit.
Combining both cases gives the desired identity
\begin{equation}
\widehat{h}(k,p)=\frac{\sinh\!\big(k(p+d)\big)}{k}\;
\F\!\left\{\frac{1}{\sqrt{\,1-2\kappa\mathfrak{p}(q)\,}}\right\}(k),\quad k\in\mathbb{R},
\label{eq:211}
\end{equation}
with the $k=0$ value understood in the limiting sense. 
Using \eqref{velocity squared} and \eqref{eq:Neumann-bed}, we can infer that 
\[
c-u(x,-d)=c^2-2\mathfrak{p}.
\]
Via the decay property of $u(x,-d)$ in the far fields, see \cite{Craig1992} and equation \eqref{hqhp}, we can conclude that $\mathfrak{p}$, $\mathfrak{p}_q$ belong to $L^2_q$.

\section{Perturbed Setting and Stability Analysis}
This section records the main statement of the theorem of this work. We begin the section by introducing the perturbed parameters, the perturbations, and assumptions we impose on them. We define
\begin{equation}\label{perturbations}
\begin{aligned}
    \kappa_\varepsilon &:= \kappa + \varepsilon , \\
    \mathfrak{p}_{\delta}(q) &:= \mathfrak{p}(q)+ \delta(q), \\d_\gamma &:= d + \gamma.
\end{aligned}
\end{equation}
 where \( \varepsilon, \delta(q), \gamma \) are all sufficiently small perturbations and $\delta \in L^2_q$. We use \( \eta_{\varepsilon,\delta,\gamma} \) to denote the profile reconstructed from the perturbed parameters above via Constantin's formula \eqref{eq:constantin_formula}. The main theorem below records the error between the surface profiles 
$\eta_{\varepsilon,\delta,\gamma}$ and 
$\eta$ in the $L^2_q$ sense.


\begin{theorem}[Error Bounds]\label{main}
Suppose that the three parameters are measured incorrectly as in \eqref{perturbations} and that $\delta$ admits a holomorphic extension $\delta^*$ to $S^*$. Let $\eta_{\varepsilon,\delta,\gamma}$ be the surface profile  reconstructed from the perturbed parameters. Then, for \( |\varepsilon|, \|\delta\|_{L^2_q}, |\gamma| \ll 1 \), the reconstruction error satisfies
\begin{equation} \label{eq:main_bound}
\| \eta_{\varepsilon,\delta,\gamma} - \eta \|_{L^2_q} \leq \left(|\gamma| |\kappa|^{\frac{(\sigma-\gamma)}{d+\sigma}}+(d+\gamma) |\varepsilon|^{\frac{(\sigma-\gamma)}{d+\sigma}} \right) \mathcal{A}_1+ \left((d+\gamma)\norm{\delta}^{\frac{(\sigma-\gamma)}{d+\sigma}}_{L^2_q}|\kappa|^{\frac{(\sigma-\gamma)}{d+\sigma}}\right)\mathcal{A}_2,
\end{equation}
where the exponent $\frac{(\sigma-\gamma)}{\sigma + d} \in (0,1)$. Here, \( \mathcal{A}_1,  \mathcal{A}_2>0 \) depend on \(\mathfrak{p}, \gamma, d, \sigma \) and $\mathcal{A}_1= \norm{\mathfrak{p}}^{\frac{(\sigma-\gamma)}{d+\sigma}}_{L^2_q} \mathcal{A}_2 $.
\end{theorem}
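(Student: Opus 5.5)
The idea is to write the reconstruction as a Fourier multiplier acting on a bed datum, split the error into a depth-mismatch part and a data-mismatch part, and turn the exponential growth of the multiplier into a sublinear power of the data error by interpolating in $|k|$.

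Set $F(q):=\frac{1}{\sqrt{1-2\kappa\mathfrak p(q)}}-1$ and $F_{\eps,\delta}(q):=\frac{1}{\sqrt{1-2\kappa_\eps\mathfrak p_\delta(q)}}-1$. By \eqref{eq:constantin_formula},
\[
\wh{\eta}(k)=\frac{\sinh(kd)}{k}\wh F(k),\qquad \wh{\eta_{\eps,\delta,\gamma}}(k)=\frac{\sinh(kd_\gamma)}{k}\wh{F_{\eps,\delta}}(k).
\]
By Plancherel, $\|\eta_{\eps,\delta,\gamma}-\eta\|_{L^2_q}$ equals $(2\pi)^{-1/2}$ times the $L^2_k$ norm of the difference. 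I split it as
\[
\frac{\sinh(kd_\gamma)-\sinh(kd)}{k}\wh F+\frac{\sinh(kd_\gamma)}{k}\bigl(\wh{F_{\eps,\delta}}-\wh F\bigr)=:I+II.
\]
The mean value theorem gives $|\sinh(kd_\gamma)-\sinh(kd)|\le |\gamma||k|\cosh(|k|(d+|\gamma|))$ and $|\sinh(kd_\gamma)/k|\le d_\gamma\cosh(kd_\gamma)$. This produces the prefactors $|\gamma|$ and $(d+\gamma)$ appearing in \eqref{eq:main_bound}.

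The decisive input is Paley--Wiener decay. As in the construction of $S^*$, I take $F$ and $F_{\eps,\delta}$ (through $\mathfrak p$ and $\delta^*$) to extend holomorphically to a strip $\{|\mathrm{Im}\,q|<d+\sigma\}$ with uniform $L^2$ bounds on horizontal lines. Paley--Wiener then gives $|\wh G(k)|e^{(d+\sigma)|k|}\in L^2_k$ for $G=F$, $F_{\eps,\delta}$ and their difference. On the data side, a linearization of $t\mapsto(1-2t)^{-1/2}$, valid because $1-2\kappa\mathfrak p$ is bounded below, gives
\[
\|F_{\eps,\delta}-F\|_{L^2}\lesssim |\eps|\,\|\mathfrak p\|_{L^2}+|\kappa|\,\|\delta\|_{L^2}.
\]

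For each multiplier I interpolate between the plain $L^2$ bound and the weighted one. Pointwise in $k$,
\[
\cosh(kd_\gamma)|\wh G|\le 2\bigl(|\wh G|e^{(d+\sigma)|k|}\bigr)^{\theta}|\wh G|^{1-\theta},\qquad \theta=\frac{d+\gamma}{d+\sigma},
\]
so that $1-\theta=\frac{\sigma-\gamma}{d+\sigma}$. Hölder's inequality with exponents $1/\theta$ and $1/(1-\theta)$ then gives
\[
\|\cosh(kd_\gamma)\wh G\|_{L^2}\le C\,\|e^{(d+\sigma)|k|}\wh G\|_{L^2}^{\theta}\,\|\wh G\|_{L^2}^{1-\theta}.
\]

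\begin{itemize}
\item Applying this to term $II$ with $G=F_{\eps,\delta}-F$ yields $(d+\gamma)\bigl(|\eps|\|\mathfrak p\|+|\kappa|\|\delta\|\bigr)^{1-\theta}$ times a constant. Subadditivity of $t\mapsto t^{1-\theta}$ separates this into the $\eps$ part, with factor $\|\mathfrak p\|^{1-\theta}$ as in $\mathcal A_1$, and the $\delta$ part, with factor $|\kappa|^{1-\theta}$ as in $\mathcal A_2$. The constant $\mathcal A_2$ absorbs the weighted norm raised to the power $\theta$.
\item For term $I$ the extra factor $|k|$ is absorbed by giving up an arbitrarily small amount of the strip width. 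Using $\|F\|_{L^2}\lesssim|\kappa|\|\mathfrak p\|$, this yields $|\gamma|\,|\kappa|^{1-\theta}\|\mathfrak p\|^{1-\theta}\mathcal A_2$, matching the first term of \eqref{eq:main_bound}.
\end{itemize}

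The main obstacle is securing the analytic extension of $F_{\eps,\delta}$ to a strip of width $d+\sigma$ with $\sigma>\gamma$, together with uniform horizontal $L^2$ bounds. This requires $1-2\kappa_\eps\mathfrak p_\delta^*$ to stay away from $0$ and from the branch cut on $S^*$, using smallness of $\eps$ and $\delta^*$. I would first derive the extension of $\mathfrak p$ from the harmonic extension of $h$ beyond the bed, reflecting across $p=-d$. I would then require $\delta^*$ to be small in sup norm on $S^*$ and check the weighted bound by shifting contours in the Fourier integral.
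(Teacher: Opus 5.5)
Your argument is correct, modulo the same extra-analyticity assumption the paper makes, and it shares the paper's skeleton. Both use the split into a depth-mismatch term and a data-mismatch term, the mean value bound for $\sinh$, the linearization $\|g_{\eps,\delta}-g_{0,0}\|_{L^2_q}\lesssim|\eps|\,\|\mathfrak p\|_{L^2_q}+|\kappa|\,\|\delta\|_{L^2_q}$, and Paley--Wiener.

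The difference is in how the exponential growth of the multiplier is traded for a sublinear power. The paper cuts frequencies at $|k|=R$. It uses the plain $L^2_k$ norm below $R$ and the pointwise decay $|\wh G(k)|\le Ce^{-(d+\sigma)|k|}$ above $R$, then optimizes in $R$. You instead write $e^{(d+\gamma)|k|}|\wh G|=(e^{(d+\sigma)|k|}|\wh G|)^{\theta}|\wh G|^{1-\theta}$ and apply H\"older. Both yield the exponent $1-\theta=\frac{\sigma-\gamma}{d+\sigma}$.

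Your route is shorter and slightly more general:
\begin{itemize}
\item It only uses the weighted-$L^2$ conclusion of Paley--Wiener, which is what horizontal $L^2$ bounds actually give. The paper's pointwise decay needs more, e.g.\ $L^1$ control on horizontal lines.
\item It works for every $\gamma\in(-d,\sigma)$ instead of $\gamma<\sigma/2$.
\item It avoids checking that the optimal cutoff $R$ is positive.
\end{itemize}
The paper's cutoff argument, in exchange, exhibits an explicit frequency scale.

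A few minor corrections.
\begin{itemize}
\item In term I there is no extra factor $|k|$ to absorb. The $|k|$ from the mean value bound cancels the $1/k$ in the multiplier, so no strip width need be sacrificed.
\item For $\gamma<0$ the relevant weight in term I is really $\cosh(kd)$. This changes only the constant, and the paper makes the same slip.
\item For $\mathcal A_2$ to be independent of $\eps,\delta$, you need $\|e^{(d+\sigma)|k|}(\wh g_{\eps,\delta}-\wh g_{0,0})\|_{L^2_k}$ bounded uniformly. That requires $\delta^*(\cdot+ip)$ to be uniformly in $L^2_q$ on the enlarged strip, not just small in sup norm.
\item Reflecting $h$ across $p=-d$ only gives analyticity on $S^*$, i.e.\ half-width $d$ about the bed. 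The margin $\sigma$ must come from continuing across the free surface $p=0$, which the paper also takes for granted.
\end{itemize}
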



\begin{remark}
We make several comments about the result.
\begin{enumerate}
    \item If the perturbations $\delta$ and $\gamma$ are set to zero, then the estimate reduces to the corresponding stability bound obtained in \cite{bleile2016}.

    \item The perturbation $\delta$ is allowed to depend on the variable $q$, and our assumptions require it to be sufficiently small in the $L^2$-sense. Moreover, to apply the Paley--Wiener theorem in the form needed for the proof, we must assume that $\delta$ admits a holomorphic extension $\delta^*$ to the relevant complex domain.

    \item Although the resulting estimate is necessarily more involved than the one in \cite{bleile2016}, the reconstruction error still depends sublinearly on the size of the perturbations. In particular, the theorem shows that the stability mechanism identified in the single-parameter setting persists in this more general perturbative regime.
\end{enumerate}
\end{remark}
We postpone the proof of Theorem~\ref{main} and present it in Section~\ref{main proof}. Below, we present a sequence of auxiliary theorem and lemmas that we use to complete the proof of the main theorem. We start with the classical Paley--Wiener theorem.

\begin{theorem}[Paley--Wiener]\label{paley--wiener}
Let $F(q+ip)$ be a function holomorphic in the horizontal strip
\[
S := \{\, q+ip \in \mathbb{C} : p \in [-a,b] \,\}, \qquad a,b>0,
\]
and assume that for each fixed $p \in [-a,b]$,
$
F(\cdot + ip) \in L^2_q$.
Then 
\[
\int_{\mathbb{R}} |\widehat{F}(k,p)|^2 e^{2b|k|} \, dk < \infty,
\qquad
\int_{\mathbb{R}} |\widehat{F}(k,p)|^2 e^{-2a|k|} \, dk < \infty.
\]

In particular, there exist constants $C,\sigma>0$ such that
\[
|\widehat{F}(k,p)| \le C e^{-\sigma |k|}, \qquad k \in \mathbb{R}.
\]
\end{theorem}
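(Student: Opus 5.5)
The statement immediately above is the classical Paley--Wiener theorem, and I would prove it by contour shifting. The main tool is Plancherel applied to the Fourier transforms of $F$ along the boundary lines $p=b$ and $p=-a$.

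Fix the reference line $p=0$ and write $f_p(q)=F(q+ip)$. For $k\in\R$, the Cauchy theorem applied to $F(z)e^{-ikz}$ on the rectangle with vertices $\pm R$ and $\pm R+ip'$ gives
\[
\wh{f_{p'}}(k)=e^{-kp'}\,\wh{f_0}(k), \qquad p'\in[-a,b].
\]
This identity requires the contributions of the vertical sides to vanish as $R\to\infty$, at least along a sequence $R_n\to\infty$. I would obtain this by integrating $\int_{-a}^{b}\int_\R |F(q+ip)|^2\,dq\,dp$ over $q$ and using Fubini: if $\sup_p\|f_p\|_{L^2_q}<\infty$, the function $q\mapsto\int_{-a}^b|F(q+ip)|^2dp$ is integrable, so it tends to zero along some sequence $R_n$. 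The equality of Fourier transforms is then first understood in the $L^2$ or distributional sense, by testing against Schwartz functions. The uniform $L^2$ bound in $p$ is the step where the hypothesis as stated is slightly weak, since it gives only pointwise-in-$p$ membership in $L^2_q$. I would add uniformity, or local boundedness of $p\mapsto\|f_p\|_{L^2}$, as the standing interpretation; this is the standard form of the theorem.

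With the shift identity in hand, Plancherel at $p'=b$ and $p'=-a$ gives
\[
\int_\R|\wh{f_0}(k)|^2e^{-2bk}\,dk=2\pi\|f_b\|_{L^2}^2<\infty, \qquad \int_\R|\wh{f_0}(k)|^2e^{2ak}\,dk=2\pi\|f_{-a}\|_{L^2}^2<\infty.
\]
These give exponential $L^2$ decay of $\wh{f_0}$ at rate $2b$ as $k\to-\infty$ and at rate $2a$ as $k\to+\infty$. For a general line $p\in[-a,b]$, I would use $\wh{f_p}=e^{-kp}\wh{f_0}$. This yields $\int|\wh F(k,p)|^2e^{2\min(a+p,\,b-p)|k|}\,dk<\infty$, which is the weighted bound of the theorem with the weights interpreted relative to the distance of $p$ to the strip edges. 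In particular, the bound holds uniformly for $p$ in any compact substrip.

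The pointwise estimate $|\wh F(k,p)|\le Ce^{-\sigma|k|}$ does not follow from $L^2$ information alone, so I would upgrade it as follows. Shrink the strip slightly and apply the Cauchy integral formula on discs of fixed radius $r$. This gives $|F'|\lesssim \sup\|f_p\|_{L^2}$ in the $L^2$ sense on the inner strip, so $f_p\in H^1$ there, and it also gives $\int|F|\,dq$ control provided $F(\cdot+ip)\in L^1$. Under $L^1$ control one can bound $|\wh{f_0}(k)|\le e^{-p'k}\|f_{p'}\|_{L^1}$ directly by choosing $p'=\mp(\text{edge}-r)$ according to the sign of $k$. This gives $\sigma=\min(a,b)-r$.

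Without $L^1$, I would instead use Cauchy--Schwarz on dyadic blocks together with the $H^1$ smoothing. Equivalently, I would accept $\sigma$ in an averaged sense. The main obstacle is therefore this pointwise decay step. The intended route is to apply the $L^1$ version to the functions that appear in the paper, such as $1/\sqrt{1-2\kappa\mathfrak p}-1$, whose integrability follows from the decay of $\mathfrak p$.
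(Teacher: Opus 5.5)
The paper gives no proof of this statement; it only cites the classical theorem. Your contour shift plus Plancherel is the standard argument behind that citation, and what you actually prove is correct. This includes the shift identity $\wh{f_{p'}}(k)=e^{-kp'}\wh{f_0}(k)$ with the Fubini choice of $R_n$. It also includes the identities $\int|\wh{f_0}|^2e^{-2bk}\,dk=2\pi\|f_b\|_{L^2}^2$ and $\int|\wh{f_0}|^2e^{2ak}\,dk=2\pi\|f_{-a}\|_{L^2}^2$, which are presumably what the two displays were meant to say, and the weight $e^{2\min(a+p,\,b-p)|k|}$ on the line $p$. Adding $\sup_p\|f_p\|_{L^2_q}<\infty$ is the standard hypothesis, so that change is fine.

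What needs correcting is how you handle the parts you cannot reach. They are false as stated rather than matters of interpretation, and one of your fallbacks cannot work.

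\emph{The display with $e^{2b|k|}$ fails.} Take $0<\epsilon<b$ and $F(z)=(z-i(b+\epsilon))^{-1}$. It is holomorphic near $S$ with $\|F(\cdot+ip)\|_{L^2_q}^2\le\pi/\epsilon$. Yet $\wh F(k,b)=2\pi i\,e^{-\epsilon|k|}\mathbf 1_{\{k<0\}}$, so $\int|\wh F(k,b)|^2e^{2b|k|}\,dk=\infty$. The display with $e^{-2a|k|}$ is trivial by Plancherel.

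\emph{The pointwise bound fails under $L^2$ hypotheses.} Take $F(z)=\frac{1}{2\pi}\int_{-1}^{1}|k|^{-1/4}e^{ikz}\,dk$. It is entire, and it and all its derivatives lie in $L^2$ on every horizontal line with locally uniform bounds. But $\wh F(k,0)=|k|^{-1/4}\mathbf 1_{[-1,1]}(k)$ is unbounded. So your ``dyadic blocks plus $H^1$ smoothing'' route cannot produce a pointwise estimate, since smoothing only yields $k^m\wh{f_0}\in L^2$. An $L^1$-type hypothesis is genuinely needed, as in the moderate-decrease version in Stein--Shakarchi, whose proof is exactly your $L^1$ contour shift. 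Even then the rate on the line $p$ is $\min(a+p,b-p)$, which vanishes on the edge lines.

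\emph{Fix the sign in that step.} Your shift identity gives $\wh{f_0}(k)=e^{kp'}\wh{f_{p'}}(k)$, hence $|\wh{f_0}(k)|\le e^{kp'}\|f_{p'}\|_{L^1}$. So one takes $p'<0$ for $k>0$ and $p'>0$ for $k<0$; the factor $e^{-p'k}$ you wrote is inconsistent with your own identity.

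\emph{Your last sentence is too quick.} To apply the $L^1$ version to $1/\sqrt{1-2\kappa\mathfrak p}-1$, you need $L^1$ decay of the holomorphic extension on the shifted lines, not just decay of $\mathfrak p$ on the bed. Moreover, at the centre line $p=-d$ of $S^*$ the strip only yields rate $d$, not the $d+\sigma$ the paper uses later.
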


\begin{proof}
This is a classical form of the Paley--Wiener theorem for holomorphic functions on a horizontal strip. Since the result is standard, we omit the proof and refer the reader, for instance, to \cite{stein2011fourier}.
\end{proof}
The next lemma states that the dynamical pressure on the bed admits a holomorphic extension in a complex strip.

\begin{lemma}[Holomorphic extension of the bed pressure]\label{holo ext p}
Let $\mathfrak p(q)$ denote the dynamic pressure trace on the bed, expressed in
hodograph coordinates. Then there exists a function $\mathfrak P^*$ holomorphic
in the hodograph strip
\[
S^*:=\{q+ip:q\in\mathbb R,\; -2d<p<0\}
\]
such that
\[
\mathfrak P^*(q-id)=\mathfrak p(q).
\]
Moreover, for each fixed $p\in[-2d,0]$,
\[
\mathfrak P^*(\cdot+ip)\in L^2_q .
\]
\end{lemma}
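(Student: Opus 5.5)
The plan is to build $\mathfrak P^*$ from the holomorphic structure of the flow in hodograph variables, using the Schwarz reflection principle across the bed. Write $\zeta=q+ip$ with $-d\le p\le 0$. The map $\zeta\mapsto z(\zeta)=x+iy$ is the inverse of the hodograph transform $\mathcal H$. It is holomorphic on the interior of $T$, since $\mathcal H$ is (up to the factor $-1/c$) the holomorphic function $f$, and $f'=(u-c)-iv\neq 0$ by the no-stagnation condition \eqref{eq:no-stag}. Hence the complex velocity $W(\zeta):=(u-c)-iv$, viewed as a function of $\zeta$, is holomorphic in the open strip $-d<p<0$. By \eqref{hqhp}, $h_p+ih_q=-c/W$ up to conjugation conventions, so $h_p-ih_q$ (or its conjugate) is holomorphic in $\zeta$. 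This matches \eqref{eq:Laplace}: $h$ is harmonic with harmonic conjugate $-x+\mathrm{const}$.

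\textbf{Reflection across the bed.} On $p=-d$ we have $v=0$, equivalently $h_q=0$, so $W$ is real on the line $p=-d$. By the Schwarz reflection principle, $W$ extends holomorphically to $-2d<p<0$ via
\[
W(q+ip)=\overline{W(q-i(2d+p))}.
\]
Using Bernoulli's law, I then define
\[
\mathfrak P^*(\zeta):=\tfrac12\bigl(c^2-W(\zeta)^2\bigr)-g\,h^*(\zeta)+gd,
\]
where $h^*$ is the holomorphic function whose real part is $h$, extended in the same way. A cleaner choice is to avoid the $gy$ term by taking $\mathfrak P^*:=\tfrac12\bigl(c^2-W^2\bigr)$ directly. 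On $p=-d$ this equals $\tfrac12\bigl(c^2-(u-c)^2\bigr)$, which by the bed identity $c^2/h_p^2=c^2-2\mathfrak p$ together with \eqref{velocity squared} is exactly $\mathfrak p(q)$. This choice is holomorphic, and its trace at $q-id$ is $\mathfrak p$, as required.

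\textbf{$L^2$ bounds on each horizontal line.} For each fixed $p\in[-2d,0]$ I will use the far-field decay of the velocity toward $(-c,0)$ to control $c^2-W^2=-(W+c)(W-c)$, where $W-c$ is bounded. The problem therefore reduces to showing $W+c=u-iv\in L^2_q$ on every line of the closed strip $-d\le p\le 0$; the reflection then transfers this to $-2d\le p<-d$. For solitary waves, $u$ and $v$ decay exponentially in $x$ uniformly in $y$ (Craig--Sternberg \cite{Craig1992}). Since $q\sim -\phi/c$ is comparable to $x$ away from a compact set, by \eqref{hqhp} and boundedness of the Jacobian, this decay yields $L^2_q$ on each line. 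Note that Theorem~\ref{paley--wiener} presupposes such boundedness on lines, so this estimate is what makes the lemma usable in the proof of Theorem~\ref{main}.

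The main obstacle is the endpoint line $p=-2d$, which is the reflection of the free surface $p=0$: the extension is defined only on the open strip. I would first handle it by invoking the known regularity of solitary waves, namely that $W$ is real-analytic up to the free surface, so $W$ extends holomorphically slightly beyond $p=0$, with continuity and decay up to the boundary. Failing that, I would apply the statement on $[-2d+\epsilon,-\epsilon]$ and pass to a limit using uniform decay.
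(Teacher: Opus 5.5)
Your proposal is correct and follows the paper's proof. Both arguments Schwarz-reflect the complex velocity across the bed, where it is real, set $\mathfrak P^*=\tfrac12\bigl(c^2-W^2\bigr)$ using Bernoulli on the bed, and get $L^2_q$ on each line from the far-field decay of the velocity. Reflecting $W$ directly in the hodograph variable $\zeta$ is actually tidier than the paper's route, which reflects in physical variables and then composes with $\mathcal H^{-1}$; that composition tacitly requires extending $\mathcal H^{-1}$ below $p=-d$. Your $L^2$ argument is also more careful than the paper's, but delete the first candidate involving $-g\,h^*+gd$, whose trace on $p=-d$ is not $\mathfrak p$.
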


\begin{proof}
We first construct the extension in the physical variables and then transfer it
to the hodograph variables. Recall that the hodograph map is given by
\[
\mathcal H(x,y)
=
\left(-\frac{\phi(x,y)}{c},-\frac{\psi(x,y)}{c}\right)
=(q,p).
\]

Since $\mathcal{H}$ is a holomorphic bijection, it admits a holomorphic inverse
\[
\mathcal H^{-1}(q,p)=(x(q,p),y(q,p)).
\]
Let
\[
w(z)=(u(x,y)-c)-iv(x,y),\qquad \text{with } z=x+iy.
\] Using the fact that the flow is incompressible and
irrotational, $w$ is holomorphic in the fluid domain $\Omega$. We now reflect $w$ across the flat bed. The boundary condition on the bed gives
\[
v(x,-d)=0.
\]
Hence, after translating the bed to the line $Y=0$ by setting $Y=y+d$, the imaginary
part of $w$ vanishes on $Y=0$. By the Schwarz reflection principle, $w$ admits a
holomorphic extension across the bed. Equivalently, defining
\[
\widetilde w(x+iY)
=
\begin{cases}
w(x+i(Y-d)), & Y\geq 0,\\[4pt]
\overline{w(x-iY-id)}, & Y<0,
\end{cases}
\]
gives a holomorphic extension of the complex velocity to the reflected physical
strip below the bed.

Next, by Bernoulli's equation,
\[
2(P-P_{\mathrm{atm}})+2gy+|w|^2=c^2 .
\]
On the bed $y=-d$, the dynamic pressure is
\[
\mathfrak p(x)
=
P(x,-d)-P_{\mathrm{atm}}-gd.
\]
Since $v(x,-d)=0$, this gives
\[
2\mathfrak p(x)
=
c^2-(u(x,-d)-c)^2.
\]
Thus the bed pressure trace is determined by the square of the complex velocity on
the bed. Therefore the function
\[
\widetilde{\mathfrak P}(z)
:=
\frac12\Bigl(c^2-\widetilde w(z)^2\Bigr)
\]
is holomorphic in the reflected physical strip and agrees with the dynamic pressure
trace on the bed.

Finally, we express this holomorphic extension in hodograph variables. Define
\[
\mathfrak P^*(q+ip)
:=
\widetilde{\mathfrak P}\bigl(\mathcal H^{-1}(q,p)\bigr).
\]
Since both $\widetilde{\mathfrak P}$ and $\mathcal H^{-1}$ are holomorphic, their
composition is holomorphic in the hodograph strip. Moreover, because the physical
bed $y=-d$ is mapped by $\mathcal H$ to the line $p=-d$, we obtain
\[
\mathfrak P^*(q-id)=\mathfrak p(q).
\]

It remains to justify the $L^2_q$ condition on horizontal slices. The solitary-wave
decay assumptions imply that the velocity perturbation decays as $|q|\to\infty$.
Consequently the dynamic pressure trace belongs to $L^2_q$. The same decay holds
on each fixed horizontal line in the reflected hodograph strip, because
$\mathfrak P^*$ is obtained by analytic continuation of the same decaying velocity
field. Hence
\[
\mathfrak P^*(\cdot+ip)\in L^2_q
\qquad\text{for each }p\in[-2d,0].
\]
This proves the lemma.
\end{proof}

Next,  using Lemma~\ref{holo ext p} we show that in Lemma~\ref{holomorphic} 
\begin{equation}\label{Fdeltastar}
F_{\delta^*}(q+ip ):=\frac{1}{\sqrt{1-2\kappa_\varepsilon\bigl(\mathfrak{P}^*(q+ip )+\delta^*(q+ip)\bigr)}},
\end{equation}
is holomorphic on $S^*$ where $\delta^*$ is the holomorphic extension of $\delta$ on $S^*$.
\begin{lemma}\label{holomorphic}
Let $\mathfrak{P}^*$ be the holomorphic extension of $\mathfrak{p}$ from Lemma~\ref{holo ext p} and $\delta^*$ be the holomorphic extension of perturbation $\delta$ to $S^*$.
Define
\[
R_{\delta^*}(z):=1-2\kappa_\varepsilon\bigl(\mathfrak{P}^*(z)+\delta^*(z)\bigr),
\]
and $F_{\delta^*}$ as in \eqref{Fdeltastar}.
For sufficiently small $\varepsilon$, and small $\delta^*$ in $L^\infty(S^*)$, 
$F_{\delta^*}$ is holomorphic on
$S^*$.
\end{lemma}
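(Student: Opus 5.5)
The plan is to write $F_{\delta^*}$ as the composition of the holomorphic function $R_{\delta^*}$ with a branch of $\zeta\mapsto \zeta^{-1/2}$ that is holomorphic on a neighbourhood of the range of $R_{\delta^*}$. Holomorphy of the composition is then automatic.

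First, $R_{\delta^*}$ is holomorphic on $S^*$. This is immediate because $\mathfrak P^*$ is holomorphic there by Lemma~\ref{holo ext p}, $\delta^*$ is holomorphic by hypothesis, and $\kappa_\varepsilon$ is a constant.

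Second, and this is the main step, I will show that $R_{\delta^*}(S^*)$ stays inside a closed disc $\{\zeta:|\zeta-1|\le 1-\theta\}$ for some $\theta\in(0,1)$. Such a disc avoids the branch cut $(-\infty,0]$ of the principal square root. The natural route is a uniform bound on the unperturbed quantity $2\kappa\mathfrak P^*$ on $S^*$.

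In the proof of Lemma~\ref{holo ext p}, $\mathfrak P^*=\tfrac12(c^2-\widetilde w^2)\circ\mathcal H^{-1}$, so
\[
1-2\kappa\mathfrak P^*=\kappa\,\widetilde w^2\circ\mathcal H^{-1}.
\]
On the physical fluid domain, the no-stagnation condition \eqref{eq:no-stag} together with $(u-c,v)\to(-c,0)$ at infinity and the standard a priori bounds for solitary waves (Craig) give two facts: $\widetilde w$ stays in a compact subset of the open left half-plane region where $\operatorname{Re}\widetilde w<0$, and $|\arg(-\widetilde w)|<\pi/2$ uniformly. By Schwarz reflection the reflected values are complex conjugates, so the same bounds hold on the reflected strip. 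Hence $\kappa\widetilde w^2$ avoids $(-\infty,0]$ and lies in a compact set $K\subset\mathbb C\setminus(-\infty,0]$. I will use $K$ itself rather than a disc around $1$.

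Third, I add the perturbation. We have
\[
R_{\delta^*}=\kappa\widetilde w^2\circ\mathcal H^{-1}-2\varepsilon\mathfrak P^*-2\kappa_\varepsilon\delta^* .
\]
Here $\mathfrak P^*$ is bounded on $S^*$, since $\widetilde w$ is bounded there. So $|R_{\delta^*}-\kappa\widetilde w^2\circ\mathcal H^{-1}|\le 2|\varepsilon|\,\|\mathfrak P^*\|_{L^\infty}+2(\kappa+|\varepsilon|)\|\delta^*\|_{L^\infty(S^*)}$. Choose $\varepsilon$ and $\|\delta^*\|_{L^\infty(S^*)}$ small enough that this is less than $\tfrac12\operatorname{dist}(K,(-\infty,0])$. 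Then $R_{\delta^*}(S^*)$ lies in a compact set $K'\subset\mathbb C\setminus(-\infty,0]$. In particular $R_{\delta^*}$ never vanishes and is bounded away from zero.

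Finally, the principal branch $\zeta^{-1/2}=\exp(-\tfrac12\operatorname{Log}\zeta)$ is holomorphic on $\mathbb C\setminus(-\infty,0]$. The composition $F_{\delta^*}=R_{\delta^*}^{-1/2}$ is therefore holomorphic on $S^*$. It also agrees with \eqref{Fdeltastar} on the line $p=-d$, where $R_{\delta^*}$ is real and positive for small perturbations, so this is the correct branch. As a by-product it is bounded, which will be useful for the later Paley--Wiener application.

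The main obstacle is the uniform-in-$S^*$ bound on $\widetilde w$ away from zero, i.e.\ the control of the range of $\kappa\widetilde w^2\circ\mathcal H^{-1}$. The proof of Lemma~\ref{holo ext p} only records holomorphy and $L^2$ slices, so I would first try to extract this bound from no-stagnation plus decay at infinity, using compactness on bounded regions. If the full strip $-2d<p<0$ causes trouble, a fallback is to shrink the strip slightly to $-2d+\eta_0<p<-\eta_0$. That shrinking is harmless for the Paley--Wiener step, which only needs some strip around $p=-d$.
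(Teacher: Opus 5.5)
Your proof is correct and has the same skeleton as the paper's. Both show that $R_{\delta^*}$ is holomorphic, establish a positive distance between the unperturbed range and $(-\infty,0]$, absorb a small $L^\infty$ perturbation by the triangle inequality, and compose with the principal branch of $\zeta^{-1/2}$.

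The difference is where the separation from the branch cut comes from. The paper takes $R_0=1-2\kappa_\varepsilon\mathfrak P^*$, with $\varepsilon$ already built in, and cites Bleile's Lemma 3.2 for the fact that $R_0(S^*)$ avoids the cut. It then asserts a uniform distance $m_0>0$ and treats only $\delta^*$ perturbatively.

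You instead derive the separation intrinsically. You use the identity $1-2\kappa\mathfrak P^*=\kappa\,\widetilde w^{\,2}\circ\mathcal H^{-1}$, which follows from the construction in Lemma~\ref{holo ext p} because $\kappa c^2=1$. You combine it with the uniform bound $\sup(u-c)<0$, which comes from no-stagnation, decay at infinity and compactness. Reflection only conjugates values of $w$ taken at points of $\Omega$, so the bound holds on all of $S^*$; your fallback of shrinking the strip is unnecessary. You then absorb $\varepsilon$ and $\delta^*$ in a single step using $\|\mathfrak P^*\|_{L^\infty(S^*)}<\infty$.

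What your route buys:
\begin{enumerate}
\item It is self-contained.
\item It actually proves the uniform distance $m_0$ through compactness of the range. Mere non-intersection on a non-compact strip would not give this.
\item As a by-product, $|R_{\delta^*}|$ is bounded away from zero and $F_{\delta^*}$ is bounded on $S^*$. This is exactly the claim that ``the denominators are uniformly bounded away from $0$ on $S^*$'' that the paper uses without proof in the subsequent decay lemma.
\end{enumerate}
The paper's route is shorter but rests entirely on the external lemma.

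The price of your route is twofold. It depends on the specific representation $\mathfrak P^*=\tfrac12\bigl(c^2-\widetilde w^{\,2}\bigr)\circ\mathcal H^{-1}$. It also needs the decay $(u-c,v)\to(-c,0)$ to be uniform in $y$, with $u-c<0$ holding up to the boundary of $\Omega$. Both are available in the paper's setting.

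You were also right to drop your initial idea of confining the range to a disc around $1$. That would require $|2\kappa\mathfrak P^*|<1$ uniformly, which need not hold, whereas the compact set $K$ works.
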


\begin{proof}
Since $\mathfrak{P}^*$ is holomorphic on $S^*$ by Lemma~\ref{holo ext p} and $\delta^*$ is also assumed
holomorphic on $S^*$, it follows that
$R_{\delta^*}$
is holomorphic on $S^*$. When $\delta^*\equiv0$, the quantity $R_0(z)=1-2\kappa_\varepsilon \mathfrak{P}^*(z)$ is holomorphic. Moreover, by \cite[Lemma 3.2]{bleile2016}, for sufficiently small $\varepsilon$, the function $R_0$
does not intersect the branch cut $(-\infty,0]$ on $S^*$. Since $R_0(S^*)$ is separated
from $(-\infty,0]$, then there exists $m_0>0$ such that
\[
\operatorname{dist}\bigl(R_0(z),(-\infty,0]\bigr)\ge m_0
\qquad \text{for all } z\in S^*.
\]

Now choose $\delta^*$ so that
\[
2|\kappa_\varepsilon|\,\|\delta^*\|_{L^\infty(S^*)}<m_0.
\]
Then, for every $z\in S^*$,
\[
|R_{\delta^*}(z)-R_0(z)|
=
2|\kappa_\varepsilon|\,|\delta^*(z)|
\le
2|\kappa_\varepsilon|\,\|\delta^*\|_{L^\infty(S^*)}
<
m_0.
\]
Therefore,
\[
\operatorname{dist}\bigl(R_{\delta^*}(z),(-\infty,0]\bigr)
\ge
\operatorname{dist}\bigl(R_0(z),(-\infty,0]\bigr)-|R_{\delta^*}(z)-R_0(z)|
>
0
\]
for all $z\in S^*$. Hence $R_{\delta^*}(S^*)\subset \mathbb{C}\setminus(-\infty,0]$.

Since the principal branch of the square root is holomorphic on
$\mathbb{C}\setminus(-\infty,0]$, the map
\[
w \mapsto w^{-1/2}
\]
is holomorphic on the range of $R_{\delta^*}$. Consequently,
\[
F_{\delta^*}(z)=R_{\delta^*}(z)^{-1/2}
\]
is holomorphic on $S^*$.
\end{proof}

The last auxiliary lemma below records the pointwise bound on the difference $\wh{\,g_{\eps,\delta}\,}(k)- \wh{\,g_{0,0}\,}(k)$. 

\begin{lemma}[Decay of \(\wh{g_{\eps,\delta}}\) - \(\wh{g_{0,0}}\)]
Under the assumptions used in Theorem~\ref{main}, there exist \(\sigma>0\) and \(C>0\) such that
\(\wh{g_{\eps,\delta}}\in L^{2}_k\) and
\[
\big|\;\wh{\,g_{\eps,\delta}\,}(k)- \wh{\,g_{0,0}\,}(k)\;\big|\;\le\;C\,e^{-(d+\sigma)\,|k|}\quad(k\in\mathbb{R}),
\]
where 
\[
g_{\eps,\delta}(q)=\frac{1}{\sqrt{1-2(\kappa+\eps)(\mathfrak{p}_{\delta}(q))}}-1, \qquad \qquad g_{0,0}(q)=\frac{1}{\sqrt{1-2\kappa \mathfrak{p}(q)}}-1.
\]
\end{lemma}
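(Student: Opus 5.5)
We plan to derive the exponential decay from the holomorphy established in Lemma~\ref{holomorphic}, combined with the Paley--Wiener theorem (Theorem~\ref{paley--wiener}) applied on a strip centered at the bed line $p=-d$. We will write the difference as
\[
G(z):=F_{\delta^*}(z)-F_0^{(0)}(z),\qquad F_0^{(0)}(z):=\bigl(1-2\kappa\mathfrak P^*(z)\bigr)^{-1/2},
\]
so that $G(q-id)=g_{\eps,\delta}(q)-g_{0,0}(q)$ (the constants $-1$ cancel). By Lemma~\ref{holomorphic}, applied once with $(\eps,\delta^*)$ and once with $(0,0)$, both terms are holomorphic on $S^*=\{-2d<p<0\}$. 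Hence $G$ is holomorphic on a strip of half-width $d$ about the line $p=-d$. For the estimate we shall use a slightly smaller closed strip $\{-2d+\sigma'\le p\le -\sigma'\}$; this keeps us away from the boundary of $S^*$, where the $L^2$ control is only asserted up to the edge.

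The next step is to verify the hypotheses of Theorem~\ref{paley--wiener}, namely that $G(\cdot+ip)\in L^2_q$ for each fixed $p$ in the chosen strip. We will use the algebraic identity
\[
R_1^{-1/2}-R_2^{-1/2}=\frac{R_2-R_1}{R_1^{1/2}R_2^{1/2}\bigl(R_1^{1/2}+R_2^{1/2}\bigr)},
\]
where $R_2-R_1=2\eps\mathfrak P^*+2\kappa_\eps\delta^*$. The uniform separation $\operatorname{dist}(R,(-\infty,0])\ge m_0/2$ obtained in the proof of Lemma~\ref{holomorphic} bounds the denominator below, uniformly on the strip: for $R$ in a sector avoiding the negative axis, $|R^{1/2}|$ is bounded below and $R_1^{1/2}+R_2^{1/2}$ has positive real part. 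It follows that
\[
|G|\le C\bigl(|\eps||\mathfrak P^*|+|\kappa_\eps||\delta^*|\bigr),
\]
and so $G(\cdot+ip)\in L^2_q$ by Lemma~\ref{holo ext p} together with the assumption that $\delta^*(\cdot+ip)\in L^2_q$. Applying the same argument to $F_{\delta^*}-1$ gives $\wh{g_{\eps,\delta}}\in L^2_k$ by Plancherel.

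We then apply Theorem~\ref{paley--wiener} to the shifted function $z\mapsto G(z-id)$, with $a=b=d-\sigma'$. This gives
\[
\int|\wh G(k,-d)|^2e^{2(d-\sigma')|k|}\,dk<\infty.
\]
From this we intend to pass to a pointwise bound, using the standard contour-shift representation
\[
\wh G(k,-d)=e^{\mp(d-\sigma')k}\int G(q-id\pm i(d-\sigma'))e^{-ikq}\,dq.
\]
Turning this into a pointwise bound requires $L^1$ control of $G$ on the shifted lines, or an additional averaging over $p$.

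\textbf{Main obstacle.} We expect the hardest part to be the rate itself. Paley--Wiener on $S^*$ gives decay at most like $e^{-(d-\sigma')|k|}$, not $e^{-(d+\sigma)|k|}$. To obtain the rate $d+\sigma$ we would need holomorphy of $G$ on a wider strip of half-width $d+\sigma$ around $p=-d$, i.e.\ on $-2d-\sigma<p<\sigma$. Our plan there is to enlarge the domain: extend $w$ slightly above the free surface using its real-analyticity (solitary waves are analytic up to the surface), and reflect below the bed to depth $-2d-\sigma$. We would then reprove Lemma~\ref{holomorphic} on this enlarged strip $S^*_\sigma$, assuming that $\delta^*$ extends there as well, with $\sigma$ small enough that the branch-cut separation persists. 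Finally, we would obtain the pointwise bound from Cauchy's theorem by shifting the Fourier contour to $p=-d\pm(d+\sigma)$, after which $C$ is controlled by the $L^1_q$ norms of $G$ on those lines. These $L^1$ norms follow from the quadratic-type decay of $\mathfrak P^*$ and $\delta^*$, and establishing them is the remaining technical point we would need to check.
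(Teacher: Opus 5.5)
Your route is the paper's route: $G$ is the difference of the two continued radicals, holomorphy on $S^*$ comes from Lemma~\ref{holomorphic}, $G(\cdot+ip)\in L^2_q$ follows from the difference-of-square-roots identity with denominators bounded below, and then Paley--Wiener is applied. Your slice check is more complete than the paper's, which never uses $\delta^*(\cdot+ip)\in L^2_q$.

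The obstacle you flag is genuine, and it is exactly where the paper's own proof is unsupported. The paper asserts a $\sigma>0$ ``coming from any extra margin of analyticity beyond width $2d$'' without producing that margin. It then jumps from a weighted $L^2$ bound to a pointwise one. Holomorphy on $S^*$, a strip of half-width $d$ about $p=-d$, gives at best $\int|\wh G(k,-d)|^2e^{2d|k|}\,dk<\infty$. Your remedy is the right one: continue the flow a distance $\sigma$ above $p=0$, and the reflection across the bed then symmetrizes this to $-2d-\sigma<p<\sigma$. But this is the substantive step, and neither you nor the paper supplies it. It needs real-analyticity of the free surface together with a width that does not shrink as $|q|\to\infty$. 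That is plausible from the exponential convergence of a solitary wave to uniform flow, but it must be proved, and $\sigma\to0$ as the wave approaches the extreme one.

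Your extra hypothesis that $\delta^*$ continues to the wider strip is not a convenience: the lemma is false without it. Work in the bed-centred variable. A bound $|\wh{g_{0,\delta}}-\wh{g_{0,0}}|\le Ce^{-(d+\sigma)|k|}$ lets Fourier inversion continue $g_{0,\delta}-g_{0,0}$ to $|\operatorname{Im}q|<d+\sigma$. Whenever $\mathfrak p$ also continues (which is what your enlargement step is meant to show), so does $E=(1-2\kappa(\mathfrak p+\delta))^{-1/2}$. This $E$ stays bounded away from $0$ up to $|\operatorname{Im}q|=d$, so $\delta=\bigl(1-E^{-2}\bigr)/(2\kappa)-\mathfrak p$ becomes holomorphic across $\partial S^*$. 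Now take $\delta(q)=a\sqrt{q^2+d^2}\,e^{-q^2}$. It is real, small in $L^2_q$ and in $L^\infty$ of the strip, and holomorphic on $S^*$, but it has square-root branch points at $q=\pm id$. So it meets the hypotheses of Theorem~\ref{main} and violates the conclusion.

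There are three smaller points.

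First, averaging in $p$ cannot replace $L^1_q$ control for the pointwise bound. The identity $\wh G(k,-d+s)=e^{-ks}\wh G(k,-d)$ carries any local singularity of $\wh G(\cdot,-d)$ to every line. The $L^1$ norms would instead come from the exponential (not quadratic) decay of the solitary wave, plus an $L^1$ assumption on $\delta^*$.

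Second, you can avoid the pointwise bound altogether. Its only use is the tail of $\textbf{\textit{II}}$ in Section~\ref{main proof}, and that is equally served by $\int_{|k|>R}|\wh G|^2e^{2(d+\gamma)|k|}\,dk\le e^{-2R(\sigma-\gamma)}\int|\wh G|^2e^{2(d+\sigma)|k|}\,dk$.

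Third, with $\F f(k)=\int fe^{-ikq}\,dq$ the contour shift reads $\wh G(k,-d)=e^{ks}\int G(q-id+is)e^{-ikq}\,dq$. So your $e^{\mp}$ should be $e^{\pm}$. This is harmless, since both directions are used.
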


\begin{proof}
Notice that $g_{\eps,\delta}$ and $g_{0,0}$ are \(L^{2}_q\) functions provided \(\mathfrak{p},\delta\in L^{2}_q\) and the radicands stay away
from the nonpositive real axis. Using the identity
\[
\frac{1}{\sqrt{1-x}}-1 \;=\; \frac{x}{\sqrt{1-x}\,\bigl(1+\sqrt{1-x}\bigr)},
\]
with \(x=2(\kappa+\eps)(\mathfrak{p}_{\delta})\) or \(x=2\kappa \mathfrak{p}\), we can write
\begin{equation}\label{splitting}
\begin{aligned}
    \frac{1}{\sqrt{1-2(\kappa+\eps)(\mathfrak{p}_{\delta})}} 
- \frac{1}{\sqrt{1-2\kappa \mathfrak{p}}}&=\frac{1}{\sqrt{1-2(\kappa+\eps)(\mathfrak{p}_{\delta})}}-1
- \left(\frac{1}{\sqrt{1-2\kappa \mathfrak{p}}}-1\right)\\&=\frac{2(\kappa+\eps)(\mathfrak{p}_{\delta})}{\sqrt{1-2(\kappa+\eps)(\mathfrak{p}_{\delta})}(1+\sqrt{1-2(\kappa+\eps)(\mathfrak{p}_{\delta})})}\\&\qquad \qquad + \frac{-2\kappa \mathfrak{p}}{\sqrt{1-2\kappa \mathfrak{p}}(1+\sqrt{1-2\kappa \mathfrak{p}})}.
\end{aligned}
\end{equation}
Recalling the fact that $\mathfrak{p}$ and its derivative are square integrable in $q$, we can conclude that both terms on the right-hand side of \eqref{splitting} are also square integrable. Hence, $
\|g_{\eps,\delta}-g_{0,0}\|_{L^{2}_q}< \infty$. Moreover, via the no-stagnation assumption and the definition of $h_p$ in \eqref{hqhp}, it is straightforward to see that $h_p>0$. Notice also that $h\geq 0$ and $h\in W^{1,\infty}$. All these facts combined allows us to infer that there exist some constants $M$ and $m$ such that
\[
\norm{h_p}_{L^\infty}=M\geq 1, \qquad m=\inf h_p.
\]
Hence, recalling \eqref{hp on bed}, on the bed we also obtain
\[
\frac{1}{M^2}\leq 1-2\kappa \mathfrak{p}\leq \frac{1}{m^2}.
\]
As a consequence, we arrive at the following $L^2_q$ estimate
\begin{equation}\label{g00 bound}
    \begin{aligned}
        \norm{g_{0,0}}_{L^{2}_q}=\norm{\frac{2\kappa \mathfrak{p}}{\sqrt{1 - 2\kappa \mathfrak{p}}
        (1+\sqrt{1 - 2\kappa \mathfrak{p}})}}_{L^{2}_q}\leq
        2\kappa M \norm{\mathfrak{p}}_{L^{2}_q}\leq
        2^{3/2}\kappa M^{\frac{3}{2}} \norm{\mathfrak{p}}_{L^{2}_q}.
    \end{aligned}
\end{equation}

Furthermore, for sufficiently small $\varepsilon$, the perturbed quantity $1-2(\kappa+\varepsilon) \mathfrak{p}$ can also be estimated from above and below using the constants $A_1$ and $a_1$ as follows
\[ 
A_1 \leq 1-2(\kappa+\varepsilon) \mathfrak{p} \leq a_1.
\]
Finally, perturbing $\mathfrak{p}$ sufficiently small allows us to maintain a similar estimate where
\[
A_2 \leq 1-2(\kappa+\varepsilon) \mathfrak{p}_{\delta} \leq a_2.
\]
for some constants $A_2$ and $a_2$.
Hence, we obtain
\[
\begin{aligned}
\left|g_{\eps,\delta}\right|&=\left|\frac{2(\kappa+\eps)(\mathfrak{p}_{\delta})}{\sqrt{1-2(\kappa+\eps)(\mathfrak{p}_{\delta})}(1+\sqrt{1-2(\kappa+\eps)(\mathfrak{p}_{\delta})})}\right|\\&\lesssim C(\kappa, \varepsilon,a_2,A_2)\left|\mathfrak{p}+\delta\right|.
\end{aligned}
\]

Now, we define on \(S^{*}\) the following difference
\[
G(q+ip)
:= 
\frac{1}{\sqrt{1-2(\kappa+\eps)\bigl(\mathfrak{P}^{*}_{\delta^*}(q+ip)\bigr)}}
-
\frac{1}{\sqrt{1-2\kappa\,\mathfrak{P}^{*}(q+ip)}},
\]
the extension of $g_{\varepsilon,\delta}-g_{0,0}$, where $\mathfrak{P}^*$ is the holomorphic extension of $\mathfrak{p}$ as in Lemma~\ref{holo ext p}.
Thus, via Lemma~\ref{holomorphic}, \(G\) is holomorphic on \(S^{*}\). 
Moreover, for fixed \(p\in[-2d,0]\), the same algebra used in \eqref{splitting} leads to
\(\,G(\cdot+ip)\in L^{2}_q\), since \(\mathfrak{P}^{*}(\cdot+ip)\in L^{2}_q\) 
and the denominators are uniformly bounded away from \(0\) on \(S^{*}\). Hence, $G$ satisfies the assumptions of Theorem~\ref{paley--wiener}.

Thus, Theorem~\ref{paley--wiener} allows us to infer that
\(G\) admits a Fourier transform in the sense of functions and enjoys two-sided 
exponential decay. More precisely, there exists \(\sigma>0\) (coming from any extra margin of analyticity beyond width \(2d\)) and \(C>0\) such that
\[
\int_{\mathbb{R}} |\widehat{G}(k)|^2 e^{2|k|(d+\sigma)} \;dk <\infty,
\]
and 
\[
\big|\;\wh{\,G(\cdot-id)\,}(k)\;\big|
\;\lesssim 
\,e^{-(d+\sigma)\,|k|},\quad \text{for all }k\in\mathbb{R}.
\]
But \(G(\cdot-id)=g_{\eps,\delta}-g_{0,0}\), so the claimed bound follows.
\end{proof}
\section{Proof of the main theorem}\label{main proof}

This section records the proof of the main Theorem~\ref{main}. Recalling that through \eqref{eq:constantin_formula} we have
\begin{equation}
\eta_{\varepsilon,\delta,\gamma}(q) - \eta(q) = \mathcal{F}^{-1}\left\{ \frac{\sinh(k(d + \gamma))}{k} \widehat{g}_{\varepsilon,\delta}(k) - \frac{\sinh(kd)}{k} \widehat{g}_{0,0}(k) \right\}.
\end{equation}
Using the triangle inequality, one can split the difference above into two parts,
\begin{equation}
    \begin{aligned}
\| \eta_{\varepsilon,\delta,\gamma} - \eta \|^2_{L^2_q} &\leq \left\| \mathcal{F}^{-1}\left[ \frac{\sinh(k(d + \gamma)) - \sinh(kd)}{k} \widehat{g}_{0,0}(k) \right] \right\|^2_{L^2_q} \\
&\quad + \left\| \mathcal{F}^{-1}\left[ \frac{\sinh(k(d + \gamma))}{k} (\widehat{g}_{\varepsilon,\delta}(k) - \widehat{g}_{0,0}(k)) \right] \right\|^2_{L^2_q}
\\&=: \textbf{\textit{I}}+\textbf{\textit{II}}.
\end{aligned}
\end{equation}

\textbf{\textit{Estimate of I}:}
The first term is estimated using analyticity and the bound (fixed $k$):
\[ |\sinh(k(d+\gamma)) - \sinh(kd)| \leq |\gamma| |k| \cosh(k(d + \theta \gamma)),\]
for some $\theta \in (0,1)$. Hence, we obtain the following estimate
\[
|\sinh(k(d+\gamma)) - \sinh(kd)| \leq  |\gamma| |k| \cosh(|k|\max{\{d,d+ \gamma\}})\lesssim |\gamma||k|e^{|k|(d+|\gamma|)}.
\]

Since \( g_{0,0}(q) \in L^2_q \) and is analytic in a strip of width  \( \sigma \), by the Paley--Wiener theorem we can infer that \( |\widehat{g}_{0,0}(k)| \lesssim e^{-|k|(d + \sigma)} \). Thus, we obtain the following upper bound for $\textbf{\textit{I}}$:
\[
\textbf{\textit{I}} \lesssim  |\gamma|^2 \norm{ e^{|k|(d + \gamma)}\widehat{g}_{0,0}(k)}^2_{L^2_k}.
\]
Furthermore, let us split up the upper bound of $\textbf{\textit{I}}$ above into two sub-integrals, namely
\begin{equation}
\begin{aligned}
  |\gamma|^2\left(\textbf{\textit{I}}_1+\textbf{\textit{I}}_2\right):&= |\gamma|^2\int_{B_{R}(0)}|\widehat{g}_{0,0}(k)|^2 e^{2|k|(d + \gamma+\sigma/2)}e^{-|k|\sigma}\;dk\\&\qquad \qquad \qquad + |\gamma|^2\int_{(B_{R}(0))^c}|\widehat{g}_{0,0}(k)|^2 e^{2|k|(d + \gamma+\sigma/2)}e^{-|k|\sigma}\;dk \\&
  \leq |\gamma|^2e^{2R(d+\gamma)}\int_{B_{R}(0)}|\widehat{g}_{0,0}(k)|^2\;dk+|\gamma|^2e^{-R\sigma}\int_{(B_{R}(0))^c}|\widehat{g}_{0,0}(k)|^2 e^{2|k|(d + \gamma+\sigma/2)}\;dk\\&
  \leq |\gamma|^2e^{2R(d+\gamma)}\int_{B_{R}(0)}|\widehat{g}_{0,0}(k)|^2\;dk+|\gamma|^2C^2e^{-R\sigma}\int_{(B_{R}(0))^c}e^{-2|k|(d +\sigma)} e^{2|k|(d + \gamma+\sigma/2)}\;dk\\& \leq |\gamma|^2\left(e^{2R(d+\gamma)}\norm{\widehat{g}_{0,0}(k)}^2_{L^2_k}+\frac{C^2 e^{2R(\gamma-\sigma)}}{\sigma-2\gamma}\right),
  \end{aligned}
\end{equation}
for $|\gamma| <\frac{\sigma}{2}$ chosen sufficiently small. Upon minimizing the right quantity above, one can check the optimal bound is attained when 
\[
R=\frac{1}{2(d+\sigma)} \ln\left({\frac{C^2(\sigma-\gamma)}{(\sigma-2\gamma)(d+\gamma) \norm{g_{0,0}}_{L^2_q}^2}}\right).
\]

\textbf{\textit{Estimate of II}:}
We define
\begin{align*}
\Gamma(q; \kappa+\varepsilon, \mathfrak{p}_\delta) &:= \frac{1}{\sqrt{1 - 2(\kappa + \varepsilon)(\mathfrak{p}_\delta)}}.
\end{align*}
By the Taylor expansion, we get
\[
\Gamma(q;\kappa+\varepsilon,\mathfrak p+\delta)
-
\Gamma(q;\kappa,\mathfrak p)
=
\varepsilon\,\partial_\kappa\Gamma(q;\kappa,\mathfrak p)
+
\delta\,\partial_{\mathfrak p}\Gamma(q;\kappa,\mathfrak p)
+
\text{higher-order terms},\]
where we have
\[
\begin{aligned}
\frac{\partial \Gamma}{\partial \kappa} &= \frac{\mathfrak{p}}{(1 - 2\kappa \mathfrak{p})^{3/2}}, \quad
\frac{\partial \Gamma}{\partial \mathfrak{p}} = \frac{\kappa}{(1 - 2\kappa \mathfrak{p})^{3/2}}.
\end{aligned}
\]

Hence,
\begin{equation}\label{diff}
\begin{aligned}
\norm{g_{\varepsilon,\delta}(q)-g_{0,0}(q)}_{L^2_q}&=\norm{\Gamma(q; \kappa+\varepsilon, \mathfrak{p}_\delta)-\Gamma(q; \kappa, \mathfrak{p})}_{L^2_q}\\&\lesssim \left\|\varepsilon \frac{\partial \Gamma}{\partial \kappa}+\delta \frac{\partial \Gamma}{\partial \mathfrak{p}}\right\|_{L^2_q}\\&=\left\|\frac{\varepsilon \mathfrak{p}}{(1 - 2\kappa \mathfrak{p})^{3/2}} + \frac{\delta \kappa}{(1 - 2\kappa \mathfrak{p})^{3/2}}\right\|_{L^2_q}\\&
\leq2\sqrt{2} M^{3} (\norm{\mathfrak{p}}_{L^2_q}|\varepsilon|+\norm{\delta}_{L^2_q}|\kappa|)
\end{aligned}
\end{equation}
Now, using the fact that  for all $k \in \mathbb{R}$,
$\frac{\sinh{k(d+\gamma)}}{k}\leq (d+\gamma)e^{|k|(d+\gamma)},$ we therefore obtain 
\begin{equation}
    \textbf{\textit{II}} \leq (d+\gamma)^2\left\|\left[ e^{|k|(d+\gamma)}(\widehat{g}_{\varepsilon,\delta}(k) - \widehat{g}_{0,0}(k)) \right] \right\|^2_{L^2_k}.
\end{equation}
As before, we split $\textbf{\textit{II}}$, 
\begin{equation}
\begin{aligned}
(d+\gamma)^2\left(\textbf{\textit{II}}_1+\textbf{\textit{II}}_2\right)&\leq (d+\gamma)^2 \int_{B_{R}(0)}|\widehat{g}_{\varepsilon,\delta}(k) - \widehat{g}_{0,0}(k)|^2 e^{2|k|(d + \gamma+\sigma/2)}e^{-|k|\sigma}\;dk \\&\qquad + (d+\gamma)^2\int_{(B_{R}(0))^c}|\widehat{g}_{\varepsilon,\delta}(k) - \widehat{g}_{0,0}(k)|^2  e^{2|k|(d + \gamma+\sigma/2)}e^{-|k|\sigma}\;dk\\&
\leq  (d+\gamma)^2 e^{2R(d+\gamma)} \left\|\widehat{g}_{\varepsilon,\delta}(k) - \widehat{g}_{0,0}(k)\right\|^2_{L^2_k}\\&\qquad \qquad  + (d+\gamma)^2 C^2 e^{-R\sigma}\int_{(B_{R}(0))^c} e^{-2|k|(d+\sigma)} e^{2|k|(d + \gamma+\sigma/2)}\;dk\\&
\leq (d+\gamma)^2 \left(e^{2R(d+\gamma)}\norm{\widehat{g}_{\varepsilon,\delta}(k) - \widehat{g}_{0,0}(k)}^2_{L^2_k}+ \frac{C^2 e^{2R(\gamma-\sigma)}}{\sigma-2\gamma}\right),
\end{aligned}
\end{equation}
for $\gamma <\frac{\sigma}{2}$ chosen sufficiently small. Upon minimizing the right quantity above, one can check the optimal bound is attained when 
\[
R=\frac{1}{2(d+\sigma)} \ln\left({\frac{C^2(\sigma-\gamma)}{(\sigma-2\gamma)(d+\gamma) \norm{\widehat{g}_{\varepsilon,\delta}(k) - \widehat{g}_{0,0}(k)}^2_{L^2_k}}}\right).
\]
Upon gathering all the estimates we have, we obtain
\begin{equation} 
    \begin{aligned}
\| \eta_{\varepsilon,\delta,\gamma} - \eta \|^2_{L^2_q} &\leq|\gamma|^2\left(e^{2R(d+\gamma)}\norm{\widehat{g}_{0,0}(k)}^2_{L^2_k}+\frac{C^2 e^{2R(\gamma-\sigma)}}{\sigma-2\gamma}\right) \\&\qquad \qquad+(d+\gamma)^2 \left(e^{2R(d+\gamma)}\norm{\widehat{g}_{\varepsilon,\delta}(k) - \widehat{g}_{0,0}(k)}^2_{L^2_k}+ \frac{C^2 e^{2R(\gamma-\sigma)}}{\sigma-2\gamma}\right)\\&\leq \left(|\gamma|^2 \norm{\widehat{g}_{0,0}(k)}^{\frac{2(\sigma-\gamma)}{d+\sigma}}_{L^2_k}
+(d+\gamma)^2 \norm{\widehat{g}_{\varepsilon,\delta}(k) - \widehat{g}_{0,0}(k)}^{\frac{2(\sigma-\gamma)}{d+\sigma}}_{L^2_k} \right) \\&\qquad \qquad \times\left(\frac{C^2(\sigma-\gamma)}{(\sigma-2\gamma)(d+\gamma)}+\frac{C^2}{\sigma-2\gamma} \left(\frac{C^2(\sigma-\gamma)}{(\sigma-2\gamma)(d+\gamma)}\right)^{\frac{\gamma-\sigma}{d+\sigma}} \right).
\end{aligned}
\end{equation}
Now, using \eqref{g00 bound} and \eqref{diff}, we obtain the following estimate
\begin{equation}
    \begin{aligned}
        \| \eta_{\varepsilon,\delta,\gamma} - \eta \|^2_{L^2_q} &\lesssim \left(|\gamma|^2 |\kappa|^{\frac{2(\sigma-\gamma)}{d+\sigma}} \norm{\mathfrak{p}}^{\frac{2(\sigma-\gamma)}{d+\sigma}}_{L^2_q}+(d+\gamma)^2 \left(\norm{\mathfrak{p}}^{\frac{2(\sigma-\gamma)}{d+\sigma}}_{L^2_q} |\varepsilon|^{\frac{2(\sigma-\gamma)}{d+\sigma}}+\norm{\delta}^{\frac{2(\sigma-\gamma)}{d+\sigma}}_{L^2_q}|\kappa|^{\frac{2(\sigma-\gamma)}{d+\sigma}}\right)\right)
    \end{aligned}
\end{equation}

Combining both contributions proves the Theorem~\ref{main}. \qedhere
\section{Numerical Computations}

In this section, we investigate the sensitivity of the recovery formula under combinations of perturbations in the wave speed, pressure trace at the bed, and fluid depth. Our goal is to complement the theoretical estimates by examining the behavior of the reconstruction error in practice.
In order to numerically measure the size of the deviation of $\eta_{\varepsilon,\delta,\gamma}$ from $\eta$, we introduce the following norm quantity
\begin{equation}\label{Error}
  E(\varepsilon,\delta,\gamma)
=
\|\eta_{\varepsilon,\delta,\gamma} - \eta\|_{L^2_q(\mathbb{R})}.  
\end{equation}

Following the approach in \cite{bleile2016}, we prescribe the auxiliary function
\[
g_{0,0}(q) = e^{-q^2/2},
\]
and solving for $\mathfrak{p}(q)$. With $c = 2$, so that $\kappa = 1/4$, and choosing $d = 1$, this yields
\[
\mathfrak{p}(q)
=
\frac{2\bigl(2 + e^{-q^2/2}\bigr)}
{2 + e^{-q^2/2} + e^{q^2/2}}.
\]
This function is smooth, rapidly decaying, and belongs to $L^2_q$.

To perturb the pressure trace on the bed, we consider the perturbation
\begin{equation}\label{delta}
    \delta(q) = a \, e^{-q^2/(2\theta^2)},
\end{equation}
where $a > 0$ is the amplitude and $\theta > 0$ controls the width of $\delta$. Notice that $\delta$ is entire, hence it admits a holomorphic extension $\delta^*$ to $S^*$ given by $\delta^*(z)=\delta(z)$. Additionally, in order to ensure that the reconstruction formula remains well-defined, we require
\[
1 - 2(\kappa + \varepsilon)\bigl(\mathfrak{p}(q) + \delta(q)\bigr) > 0
\]
for all $q \in \mathbb{R}$.
All perturbations are chosen sufficiently small so that this condition holds.

\subsection{Wave Speed Perturbation}
In this subsection, we consider the simplest perturbative setting in which only the wave-speed parameter is perturbed, while the pressure trace and the depth remain fixed. More precisely, we set $\delta = 0$ and $\gamma = 0$ and allow only $\kappa_\varepsilon=\kappa+\varepsilon
$
to vary, where $\kappa=\frac{1}{c^2}$.
 Therefore, 
\[
g_{\varepsilon,0}(q)
=
\frac{1}{\sqrt{1-2(\kappa+\varepsilon)\mathfrak{p}(q)}}-1.
\]
The perturbed surface profile is then given by
\[
\eta_{\varepsilon,0,0}(q)
=
\mathcal F^{-1}
\left[
\frac{\sinh(kd)}{k}\,\widehat g_{\varepsilon,0}(k)
\right](q).
\]

Since the depth is not perturbed in this scenario, the Fourier multiplier remains $
\dfrac{\sinh(kd)}{k}$
with \(d=1\).
To ensure that the reconstruction formula is well-defined, the radicand must remain positive:
\[
1-2(\kappa+\varepsilon)\mathfrak{p}(q)>0
\qquad
\text{for all } q\in\mathbb R.
\]
Because the chosen pressure satisfies $
\max_{q\in\mathbb R} \mathfrak{p}(q)=\frac{3}{2}$,
and since \(\kappa=1/4\), it is enough to require $1-2\Bigl(\frac14+\varepsilon\Bigr)\frac32>0$.
This simplifies to $\varepsilon<\frac{1}{12}$.
In our numerical codes, we choose $\varepsilon$ in the following interval $
\varepsilon\in\left(-\frac{1}{12},\frac{1}{12}\right).
$

The theoretical analysis predicts sublinear growth of the form
\[
E(\varepsilon,0,0)\sim |\varepsilon|^\alpha
\qquad \text{for some } \alpha\in(0,1),
\]
with the exponent arising from analyticity and Paley--Wiener decay estimates. Numerically, we obtain the graph in Figure~\ref{wavespeederror} suggesting the relationship between the size of wave speed perturbation $\varepsilon$ with the size of the free-surface reconstruction error.
\begin{figure}[H]\label{wave speed error}
\centering
    \begin{tikzpicture}
    \node[inner sep=0pt] at (5,0) {\includegraphics[width=9.5cm]{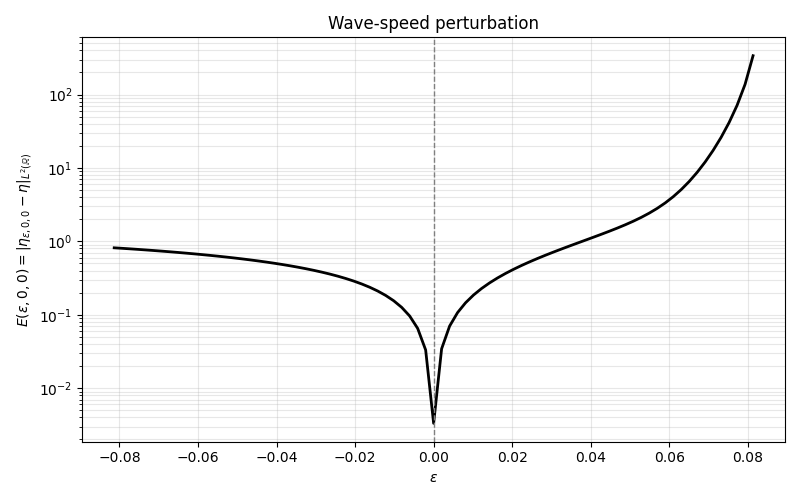}};
\end{tikzpicture}
\caption{Reconstruction error due to wave-speed perturbation}
\label{wavespeederror}
\end{figure}

\subsection{Pressure Perturbation}
In this subsection, we consider the case in which only the pressure trace is perturbed, while the wave-speed parameter and the depth remain fixed. More precisely, we set $
\varepsilon=0,
\gamma=0,$
and perturb only the bed pressure through $
\mathfrak{p}_\delta(q)=\mathfrak{p}(q)+\delta(q)$. The perturbation is different from the one used in \cite{bleile2016}. The perturbed pressure trace reads
\[
\mathfrak{p}_\delta(q)=\mathfrak{p}(q)+a e^{-q^2/(2\theta^2)}.
\]
Since \(\varepsilon=0\) and \(\gamma=0\), the wave-speed parameter remains fixed at \(\kappa=\frac14\), and the depth also remains fixed at \(d=1\).
Moreover, the corresponding reconstructed surface profile here is given by 
\[
\eta_{0,\delta,0}(q)
=
\mathcal F^{-1}
\left[
\frac{\sinh(kd)}{k}\,\widehat g_{0,\delta}(k)
\right](q).
\]

To ensure that the reconstruction formula is well-defined in this scenario, the term inside the square root in \(g_{0,\delta}\) must remain real, namely
\[
1-2\kappa\bigl(\mathfrak{p}(q)+\delta(q)\bigr)>0
\qquad\text{for all }q\in\mathbb R.
\]
However, since $
\kappa=\frac14,$
this condition translates to
\[
1-\frac12\bigl(\mathfrak{p}(q)+\delta(q)\bigr)>0.
\]
Thus, the amplitude \(a\) must be chosen sufficiently small so that the perturbed pressure does not force the radicand to vanish or become negative. It is worth adding that in our numerics, we set $\theta=2$ to remove one degree of freedom and simplify the computation. We obtain the following graph explaining the connection between the pressure trace perturbation, determined by the amplitude $a$, and the reconstruction  error. As seen in Figure~\ref{amplitudeerror}, when the measurement error comes only from the pressure trace, the surface reconstruction error increases in the $L^2_q$ sense as the amplitude of $\delta$ increases. 
\begin{figure}[H]
\begin{center}
    \begin{tikzpicture}
    \node[inner sep=0pt] at (5,0) {\includegraphics[width=9.5cm]{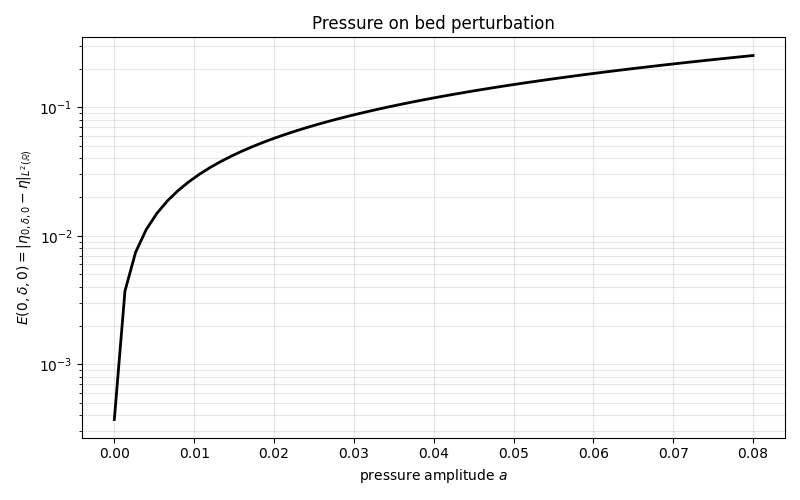}};
\end{tikzpicture}
\caption{Reconstruction error due to bed-pressure perturbation}
\label{amplitudeerror}
\end{center}
\end{figure}

\subsection{Depth Perturbation}
Now, we consider the case in which only the fluid depth is perturbed, while the wave-speed parameter and the pressure trace remain fixed. More precisely, we set
$\varepsilon=0,
\delta=0,$
and allow only the depth to vary through $d_\gamma=d+\gamma$.
Therefore, the perturbed recovered profile takes the form
\[
\eta_{0,0,\gamma}(q)
=
\mathcal F^{-1}
\left[
\frac{\sinh(k(d+\gamma))}{k}\,\widehat g_{0,0}(k)
\right](q).
\]
Thus, in the depth-only perturbation experiment, the sole source of error comes from the change in the Fourier multiplier from $\sinh(kd)/k$ to $\sinh(k(d+\gamma))/k$.

Notice that here, the square-root factor in \(g_{0,0}\) is unchanged. Hence, the admissibility condition associated with the radicand is automatically satisfied once the pressure trace on the bed is admissible. The only additional requirement is for the perturbed depth to remain physically meaningful, namely $d+\gamma>0$.
Since we fix \(d=1\), we then require $
\gamma>-1$.
Thus, in the numerical implementation, one must choose \(\gamma\) in a range that preserves positive depth. We obtain Figure~\ref{dpth} which relates the perturbation $\gamma$ with the surface reconstruction error.
\begin{figure}[H]
\begin{center}
\begin{tikzpicture}
    \node[inner sep=0pt] at (5,0) {\includegraphics[width=9.5cm]{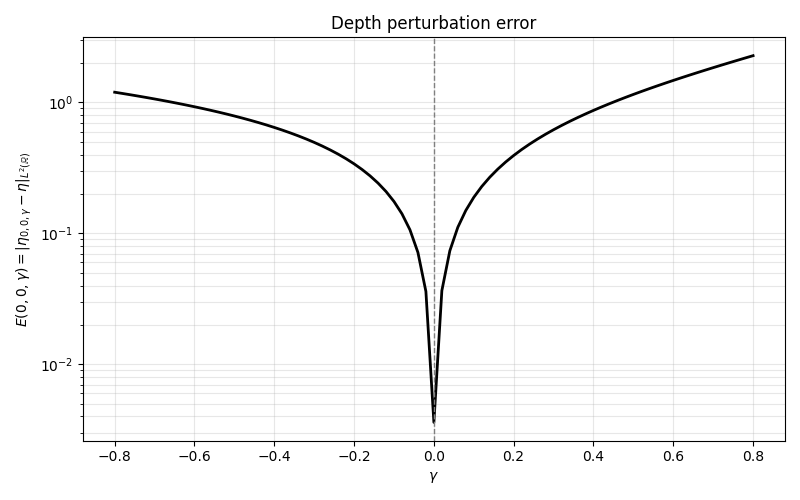}};
\end{tikzpicture}
\caption{Reconstruction error due to depth perturbation}
\label{dpth}
\end{center}
\end{figure}

\newpage
\begin{center}
\bibliographystyle{alpha}
\bibliography{Ref.bib}
\end{center}

\end{document}